\documentclass[11pt]{article}

\usepackage{amsmath,amssymb,amsthm,microtype,geometry}

\newtheorem{theorem}{Theorem}[section]

\newtheorem{lemma}{Lemma}[section]

\newtheorem{proposition}{Proposition}[section]
\newtheorem{corollary}{Corollary}[section]

\title{A sharp $p$-subadditive bound for the $\ell_p$ Hausdorff distance from convex hull}
\author{Mark Meyer}

\begin{document}

\maketitle
\begin{abstract}
    We study the $\ell_p$ Hausdorff distance from convex hull, which for a compact set $A\subset \mathbb{R}^n$ is defined by
    \begin{equation*}
        d^{(\ell_p)}(A):=\sup_{x\in \textup{conv}(A)}\inf_{a\in A}\|x-a\|_p.
    \end{equation*}
    In the planar case $n=2$, we study the problem of finding the optimal constant $C_p$ such that 
    \begin{align*}
        d^{(\ell_p)}(A+B)^p\leq C_p\left(d^{(\ell_p)}(A)^p+d^{(\ell_p)}(B)^p\right)
    \end{align*}
    for all nonempty compact $A,B\subset\mathbb{R}^2$. We resolve this question, proving that
    \begin{align*}
        C_p=\max\{1,2^{p-2}\}.
    \end{align*}
\end{abstract}

\section{Introduction}

The Minkowski sum of sets $A$ and $B$ in $\mathbb{R}^n$ is defined by
\begin{align*}
    A+B:=\{a+b:a\in A,b\in B\}.
\end{align*}
For a parameter $1\leq p\leq \infty$, define the $\ell_p$ Hausdorff distance from convex hull by
\begin{align*}
    d^{(\ell_p)}(A):=\sup_{x\in \textup{conv}(A)}\inf_{a\in A}\|x-a\|_p.
\end{align*}
Intuitively, the quantity $d^{(\ell_p)}(A)$ is the farthest distance to the set $A$ from within its convex hull. In fact, the name Hausdorff distance comes from the fact that $d^{(\ell_p)}(A)$ is exactly the distance between $A$ and $\textup{conv}(A)$, measured by the Hausdorff metric induced by the norm $\|\cdot\|_p$.

We will study the planar case, and prove the following bound.

\begin{theorem}\label{theorem:p-subadditive_bound}
    Fix $1\leq p<\infty$. If $A,B\subset\mathbb{R}^2$ are nonempty compact sets, then
    \begin{align}\label{eq:p-subadditive_bound_formula}
        d^{(\ell_p)}(A+B)^p\leq \max\{1,2^{p-2}\}\left(d^{(\ell_p)}(A)^p+d^{(\ell_p)}(B)^p\right).
    \end{align}
    The constant $\max\{1,2^{p-2}\}$ is sharp.
\end{theorem}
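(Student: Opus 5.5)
We prove the upper bound by localizing to a single point of $\textup{conv}(A+B)$ and using Carathéodory's theorem in the plane. Write $\textup{conv}(A+B)=\textup{conv}(A)+\textup{conv}(B)$, and fix $x=y+z$ with $y\in\textup{conv}(A)$ and $z\in\textup{conv}(B)$. It suffices to find $a\in A$ and $b\in B$ with
\begin{align*}
\|y+z-a-b\|_p^p\le \max\{1,2^{p-2}\}\left(d^{(\ell_p)}(A)^p+d^{(\ell_p)}(B)^p\right).
\end{align*}
The naive choice of $a$ nearest $y$ and $b$ nearest $z$ gives only the triangle inequality bound $(d_A+d_B)^p\le 2^{p-1}(d_A^p+d_B^p)$. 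That constant is wrong for every $p$, so cancellation between the error vectors $u=y-a$ and $v=z-b$ must be exploited. The freedom comes from choosing the decomposition $x=y+z$ and the candidate points. By Carathéodory's theorem in $\mathbb{R}^2$, $y$ lies in a triangle with vertices in $A$, and we may reduce to $A$ and $B$ being finite. The next step is a Shapley--Folkman type reduction in the plane: we rearrange the decomposition so that one of $y,z$ (say $z$) can be taken in $B$ itself. Then $\|x-a-z\|_p\le d_A$, and it remains to handle the mixed case. This reduction may fail, since in dimension $2$ with two summands Shapley--Folkman only says at most two points are non-extreme. So we expect both $y$ and $z$ to be genuinely interior, lying on segments or triangles.

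The main step is the following local lemma. Suppose $y$ lies on the segment $[a_1,a_2]$ and $z$ on the segment $[b_1,b_2]$, after a further planar reduction in which $x$ lies on an edge of a Minkowski-sum cell. We choose among the four points $a_i+b_j$ to get a bound involving both the error along each segment and the gaps. The key inequality we aim for is the following. For vectors $u\in$ the set of possible errors of $A$ (with $\|u\|_p\le d_A$) and similarly $v$, choosing signs $\pm$ appropriately (from the two endpoints) gives
\begin{align*}
\min_{\pm}\|u\pm v\|_p^p\le \max\{1,2^{p-2}\}\left(\|u\|_p^p+\|v\|_p^p\right).
\end{align*}
This follows from a Clarkson-type parallelogram inequality. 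Averaging gives $\tfrac12(\|u+v\|_p^p+\|u-v\|_p^p)\le \max(2^{p-1},2)\cdot\tfrac12(\|u\|_p^p+\|v\|_p^p)$, which is $\|u\|^p+\|v\|^p$ for $p\le2$ and $2^{p-2}(\|u\|^p+\|v\|^p)$ for $p\ge2$. These are exactly the target constants. The hard part is justifying that the configuration really offers opposite-sign choices $u\pm v$ with each of $\|u\|_p\le d_A$ and $\|v\|_p\le d_B$. To do this, we parametrize $y=(1-s)a_1+sa_2$ and $z=(1-t)b_1+tb_2$, and move along the segment direction to shift the weight between the summands. We expect this to be the main obstacle, especially in the $\ell_p$ geometry, where distances along a segment are not symmetric in coordinates.

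For sharpness we give two examples. For $p\le2$, take $A=\{0,e_1\}$ and $B=\{0\}$, which gives constant at least $1$. For $p>2$, take $A=\{0,(1,1)\}$ and $B=\{0,(1,-1)\}$. Then $d_A^p=d_B^p=2\cdot 2^{-p}$. The sum $A+B$ has four points forming a square of side $2$, with center $(1,0)$ at $\ell_p$ distance $\sqrt2$-type; more precisely, its distance to $(0,0)$ is $1$, while other points of the hull do worse. We will compute the supremum over the square against the four corners; the farthest point should give $d(A+B)^p=1$ when tuned. We then adjust the example (for instance by rotating the segments) to achieve the ratio $2^{p-2}$ exactly, via $u=(\tfrac12,\tfrac12)$ and $v=(\tfrac12,-\tfrac12)$ with $u\pm v$ axis-aligned.
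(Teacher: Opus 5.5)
Your proposal does not prove the upper bound. The step you flag as the main obstacle is the entire geometric content of the theorem, and nothing in your outline supplies it.

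Your averaged Clarkson inequality is correct, and it is exactly the analytic endgame the paper uses. But it only applies when $u=(a_2-a_1)/2$ and $v=(b_2-b_1)/2$ are half-sides of a parallelogram $[a_1,a_2]+[b_1,b_2]\ni x$ with $a_i\in A$, $b_j\in B$ and \emph{short} sides: $\|a_1-a_2\|_p\le 2d^{(\ell_p)}(A)$ and $\|b_1-b_2\|_p\le 2d^{(\ell_p)}(B)$.

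\begin{itemize}
\item The nearest-point errors $y-a$ and $z-b$ have the right size but no $\pm$ structure.
\item Segments or Carath\'eodory triangles through $y$ with vertices in $A$ can be far longer than $2d^{(\ell_p)}(A)$. Take $A=[0,1]\times\{0,\varepsilon\}$ and the segment from $(0,0)$ to $(1,\varepsilon)$.
\item Shapley--Folkman gives no length control either.
\end{itemize}

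The paper supplies the short parallelogram in Lemma \ref{lemma:parallelogram_reduction_K_norm}. Suppose $x\notin A+\textup{conv}(B)$ and $x\notin\textup{conv}(A)+B$; otherwise your $\max\{d^{(\ell_p)}(A),d^{(\ell_p)}(B)\}$ bound already applies.
\begin{enumerate}
\item Using Lemma \ref{lemma:sum_decomposition}, write $x=a+b$ with $a\in\partial\textup{conv}(A)\setminus A$.
\item A supporting line of $\textup{conv}(A)$ at $a$ gives $x\in[\tilde a_1,\tilde a_2]+\textup{conv}(B)$ with $\tilde a_i\in A$.
\item Replace $\textup{conv}(B)$ by a pair $b_1,b_2\in B$ minimizing $\|b_1-b_2\|_p$ among pairs on opposite sides of the line through $x-\tilde a_1$ parallel to $\tilde a_2-\tilde a_1$.
\item Lemma \ref{lemma:opposite_hyperplane} gives $\|b_1-b_2\|_p\le 2d^{(\ell_p)}(B)$, because the open ball about their midpoint of radius $\|b_1-b_2\|_p/2$ misses $B$.
\item Lemma \ref{lemma:line_passing_technical_lemma} guarantees that $x\in[\tilde a_1,\tilde a_2]+[b_1,b_2]$ still holds.
\item Repeating the argument with $A$ and $B$ swapped shortens the $A$-side.
\end{enumerate}
The case $\dim A=\dim B=1$ is handled directly with gaps. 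This reduction is the missing idea.

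Second, even with the short parallelogram in hand, your sign choice only works at its center. There $x-(a_1+b_1)=u+v$ and $x-(a_1+b_2)=u-v$, but at other points no such pair exists. For general $x$, the paper cuts the parallelogram along a diagonal. Take the triangle with vertex $0$ and sides $s_A=2u$, $s_B=2v$, and write $x=\sigma s_A+\tau s_B=(\sigma+\tau)U+(\sigma-\tau)V$ with $U=u+v$, $V=u-v$, $\sigma,\tau\ge 0$, $\sigma+\tau\le 1$. If $\sigma\ge\tau$:
\begin{itemize}
\item the distance from $x$ to the vertex $0$ is at most $(\sigma+\tau)\|U\|_p+(\sigma-\tau)\|V\|_p$;
\item the distance from $x$ to the vertex $s_A$ is at most $(1-\sigma-\tau)\|U\|_p+(1-\sigma+\tau)\|V\|_p$;
\item these two bounds sum to $\|U\|_p+\|V\|_p$.
\end{itemize}
The case $\sigma\le\tau$ uses the vertex $s_B$ instead. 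Since the minimum is at most the average, and $t\mapsto t^p$ is convex, this gives $d^{(\ell_p)}(x,A+B)^p\le\frac12(\|u+v\|_p^p+\|u-v\|_p^p)$. That is precisely your Clarkson quantity, now valid at every point of the parallelogram.

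Your sharpness examples are fine, and the $p>2$ one is the paper's up to scaling and reflection. No tuning or rotation is needed. You already observed that the center $(1,0)$ is at $\ell_p$-distance $1$ from all four points of $A+B$, so $d^{(\ell_p)}(A+B)^p\ge 1=2^{p-2}\cdot 2\cdot 2^{1-p}$, which is the required ratio.

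Finally, the naive bound $(d^{(\ell_p)}(A)+d^{(\ell_p)}(B))^p$ is not wrong at $p=1$. There it already proves the theorem.
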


The motivation for Theorem \ref{theorem:p-subadditive_bound} originates from the Dyn--Farkhi conjecture. In the paper \cite{dyn1}, Dyn and Farkhi conjectured that 
\begin{align*}
    d^{(\ell_2)}(A+B)^2\leq d^{(\ell_2)}(A)^2+d^{(\ell_2)}(B)^2
\end{align*}
holds for any nonempty compact $A,B\subset\mathbb{R}^n$. Considering a similar result proved by Cassels in \cite{cassels1}, it was natural to make this conjecture. However, the Dyn--Farkhi conjecture was disproved in the survey \cite{fradelizi1} of Fradelizi, Madiman, Marsiglietti, and Zvavitch for $n\geq 3$. Their counterexample relied fundamentally on the fact that $A\neq B$, so they further asked if the conjecture holds in the symmetric case $A=B$. Recent work of van Hintum \cite{vanHintum} showed the conjecture is false even in the symmetric case. The case $n=1$ is trivial, and as observed in \cite[Lemma 2.4]{meyer1} and \cite[Theorem 7.5]{fradelizi1}, much stronger bounds are possible.

This leaves $n=2$, which turns out to be the exceptional case. We proved in \cite{meyer2} that the Dyn--Farkhi conjecture is true, and we observed that the bound is sharp in a certain global sense. The purpose of this paper is to address the next natural question of finding the optimal $p$-dependent constant for which the function $(d^{(\ell_p)})^p$ is subadditive. Theorem \ref{theorem:p-subadditive_bound} answers exactly this question, showing that $(d^{(\ell_p)})^p$ is in fact subadditive when $1\leq p\leq 2$, and that when $p>2$ the optimal constant is $2^{p-2}$.

A particular aspect of Theorem \ref{theorem:p-subadditive_bound} worth further investigation is the sharpness of the optimal constant. The following proposition puts this into precise terms.

\begin{proposition}\label{proposition:sharpness_of_constant}
    The constant 
    \begin{align*}
        \max\{1,2^{p-2}\}
    \end{align*}
    in Theorem \ref{theorem:p-subadditive_bound} is sharp in the following sense:
    \begin{enumerate}
        \item If $1\leq p\leq 2$, then for any nonnegative $\alpha,\beta$, there exist compact sets $A,B\subset\mathbb{R}^2$ such that 
        \begin{align*}
            d^{(\ell_p)}(A)=\alpha,\qquad d^{(\ell_p)}(B)=\beta,
        \end{align*}
        and equality holds in \eqref{eq:p-subadditive_bound_formula}.\\
        \item If $p>2$, then for any $\alpha>0$, there exist compact sets $A,B\subset\mathbb{R}^2$ such that 
        \begin{align*}
            d^{(\ell_p)}(A)=d^{(\ell_p)}(B)=\alpha,
        \end{align*}
        and equality holds in \eqref{eq:p-subadditive_bound_formula}.
    \end{enumerate}
\end{proposition}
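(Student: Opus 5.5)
We will prove the proposition by explicit constructions, assuming Theorem \ref{theorem:p-subadditive_bound} only for the inequality direction. Each case then reduces to exhibiting sets for which $d^{(\ell_p)}(A+B)^p$ is at least the right-hand side.

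\emph{Case $1\le p\le 2$.} We want $d^{(\ell_p)}(A+B)^p=\alpha^p+\beta^p$. We use two "orthogonal" two-point sets, $A=\{0,2\alpha e_1\}$ and $B=\{0,2\beta e_2\}$. Then $d^{(\ell_p)}(A)=\alpha$, since the worst point of the segment is its midpoint, at distance $\alpha$ from both endpoints. Likewise $d^{(\ell_p)}(B)=\beta$.

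The sum $A+B$ is the four vertices of the rectangle $[0,2\alpha]\times[0,2\beta]$. Its center $(\alpha,\beta)$ lies at $\ell_p$-distance $(\alpha^p+\beta^p)^{1/p}$ from every vertex, so $d^{(\ell_p)}(A+B)^p\ge \alpha^p+\beta^p$. Combined with Theorem \ref{theorem:p-subadditive_bound}, whose constant is $1$ here, this gives equality.

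The degenerate cases $\alpha=0$ or $\beta=0$ are handled by taking the corresponding set to be a single point. This is consistent with the construction above, since a point has $d^{(\ell_p)}=0$.

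\emph{Case $p>2$.} We want $d^{(\ell_p)}(A+B)^p=2^{p-2}\cdot 2\alpha^p=2^{p-1}\alpha^p$, that is, $d^{(\ell_p)}(A+B)=2^{1-1/p}\alpha$. The orthogonal segments only give $2^{1/p}\alpha$, which is too small when $p>2$. So we instead rotate the segments so that the sum is a square whose center is far in $\ell_p$ from its vertices.

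Take $A=\{0,\,t(1,1)\}$ and $B=\{0,\,t(1,-1)\}$. The midpoint of $A$ is at distance $\|(t/2,t/2)\|_p=2^{1/p}t/2$ from the endpoints. Setting $t=2^{1-1/p}\alpha$ gives $d^{(\ell_p)}(A)=d^{(\ell_p)}(B)=\alpha$.

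The sum $A+B=\{0,(t,t),(t,-t),(2t,0)\}$ is a square with center $(t,0)$. The center is at distance $\|(t,0)\|_p=t$ from each vertex, so $d^{(\ell_p)}(A+B)\ge t=2^{1-1/p}\alpha$. Hence
\begin{align*}
d^{(\ell_p)}(A+B)^p\ge 2^{p-1}\alpha^p=2^{p-2}\left(\alpha^p+\alpha^p\right),
\end{align*}
and Theorem \ref{theorem:p-subadditive_bound} gives the reverse inequality.

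\emph{Main check.} Since every set involved is finite, the only care needed is to verify the exact values of $d^{(\ell_p)}(A)$ and $d^{(\ell_p)}(B)$, which must be equalities and not just lower bounds.

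For a segment $\{u,v\}$, the distance $\min(\|x-u\|_p,\|x-v\|_p)$ over $x\in[u,v]$ is maximized at the midpoint. This holds because the distance to each endpoint is linear along the segment.

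For $A+B$, only a lower bound is needed, and it is supplied directly by the center point. The upper bound comes from Theorem \ref{theorem:p-subadditive_bound}.
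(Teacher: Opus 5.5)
Your proof is correct and is essentially the paper's argument. For $1\le p\le 2$ you use the same orthogonal axis-parallel segments, and for $p>2$ you use the paper's diagonal-segment square, rotated by a right angle and reparametrized; in both cases the theorem supplies the matching upper bound, and your explicit midpoint check for two-point sets spells out a step the paper leaves implicit.
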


In summary, Proposition \ref{proposition:sharpness_of_constant} shows that in the case $1\leq p\leq 2$, not only is $(d^{(\ell_p)})^p$ subadditive without a $p$-dependent constant, but equality is possible for every pair $(d^{(\ell_p)}(A),d^{(\ell_p)}(B))=(\alpha,\beta)$, just as was observed in the paper \cite{meyer2} with the solution to the Dyn--Farkhi conjecture. However, when $p>2$, the known equality examples lie on the diagonal $(d^{(\ell_p)}(A),d^{(\ell_p)}(B))=(\alpha,\alpha)$, suggesting that away from this diagonal there may be an even sharper bound for which equality is possible for any $(\alpha,\beta)$. In fact, if we consider the case where one of the sets is convex, then it is easy to show that 
\begin{align*}
    d^{(\ell_p)}(A+B)^p\leq \max\{d^{(\ell_p)}(A)^p,d^{(\ell_p)}(B)^p\},
\end{align*}
which is strictly less than the upper bound \eqref{eq:p-subadditive_bound_formula} when $p>2$ and one of the sets $A$ or $B$ is nonconvex.

We also briefly mention the case $p=\infty$. Specifically when $n=2$, the unit $\ell_1$-ball and the unit $\ell_{\infty}$-ball differ by an invertible linear transformation, so the problem of finding the sharp upper bound for $d^{(\ell_{\infty})}$ is identical to the problem of finding the sharp upper bound for $d^{(\ell_1)}$.

\begin{corollary}\label{l_infty_bound}
    If $A,B\subset\mathbb{R}^2$ are nonempty compact sets, then
    \begin{align*}
        d^{(\ell_{\infty})}(A+B)\leq d^{(\ell_{\infty})}(A)+d^{(\ell_{\infty})}(B).
    \end{align*}
    Moreover, this bound is sharp in the same global sense as described in Proposition \ref{proposition:sharpness_of_constant} for the case $p=1$.
\end{corollary}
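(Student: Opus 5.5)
The plan is to deduce the corollary from the case $p=1$ of Theorem \ref{theorem:p-subadditive_bound} and Proposition \ref{proposition:sharpness_of_constant}, via a linear isometry between $(\mathbb{R}^2,\|\cdot\|_\infty)$ and $(\mathbb{R}^2,\|\cdot\|_1)$ up to a scalar.

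Let $T(x,y)=(x+y,\,x-y)$. Since $|x+y|+|x-y|=2\max\{|x|,|y|\}$, we have
\begin{align*}
\|Tv\|_1=2\|v\|_\infty \qquad\text{for all } v\in\mathbb{R}^2.
\end{align*}
Because $T$ is linear and invertible, it commutes with the relevant operations:
\begin{align*}
T(\textup{conv}(A))=\textup{conv}(TA), \qquad T(A+B)=TA+TB.
\end{align*}
It also maps nonempty compact sets bijectively onto nonempty compact sets. Substituting $x=Tu$ and $a=Tv$ in the definition gives
\begin{align*}
d^{(\ell_1)}(TA)=\sup_{u\in\textup{conv}(A)}\inf_{v\in A}\|T(u-v)\|_1=2\,d^{(\ell_\infty)}(A).
\end{align*}

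For the inequality, take nonempty compact $A,B\subset\mathbb{R}^2$. Applying Theorem \ref{theorem:p-subadditive_bound} with $p=1$, where the constant is $\max\{1,2^{-1}\}=1$, to $TA$ and $TB$ gives
\begin{align*}
2\,d^{(\ell_\infty)}(A+B)=d^{(\ell_1)}(TA+TB)\le d^{(\ell_1)}(TA)+d^{(\ell_1)}(TB)=2\,d^{(\ell_\infty)}(A)+2\,d^{(\ell_\infty)}(B).
\end{align*}
Dividing by $2$ yields the claim.

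For sharpness, fix $\alpha,\beta\ge 0$. Part 1 of Proposition \ref{proposition:sharpness_of_constant} with $p=1$ produces compact sets $A',B'$ with $d^{(\ell_1)}(A')=2\alpha$, $d^{(\ell_1)}(B')=2\beta$, and equality in \eqref{eq:p-subadditive_bound_formula}. Set $A=T^{-1}A'$ and $B=T^{-1}B'$. By the identity above, $d^{(\ell_\infty)}(A)=\alpha$ and $d^{(\ell_\infty)}(B)=\beta$. Equality also transfers, since $T^{-1}(A'+B')=A+B$, so $d^{(\ell_\infty)}(A+B)=\alpha+\beta$.

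The only step requiring care is the identity $\|Tv\|_1=2\|v\|_\infty$. It follows from $|s+t|+|s-t|=2\max\{|s|,|t|\}$, checked by cases on the signs of $s\pm t$. Nothing else in the argument is an obstacle.
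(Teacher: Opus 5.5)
Your proof is correct and is essentially the paper's argument. The paper uses the same map $T(x,y)=(x+y,x-y)$ in the opposite direction, via the exact isometry $\|Tv\|_\infty=\|v\|_1$ (so $d^{(\ell_\infty)}(T(S))=d^{(\ell_1)}(S)$), whereas you use the scaled identity $\|Tv\|_1=2\|v\|_\infty$ and apply the $p=1$ case directly to $TA,TB$; sharpness is obtained in both by transporting the $p=1$ examples of Proposition \ref{proposition:sharpness_of_constant} through $T$.
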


The Dyn--Farkhi conjecture is associated with a broader field of study, referred to as the convexification effect of Minkowski summation. The general idea is that the sum $A+\dots+A$ taken $m$-times divided by an appropriate scaling factor, usually $m$, tends to resemble $\textup{conv}(A)$ with respect to different measures as $m$ tends to infinity. This study began with the results of Shapley--Folkman--Starr \cite{starr1}, and Emerson--Greenleaf \cite{emerson1}, which give estimates for the Hausdorff metric. Since then, this phenomenon has been studied in a variety of different formats such as \cite{schneider1,cassels1,wegmann1,fradelizi1,FMMZ-2016,vHST-2020,A-2021,AK-2025,FLZ-2022}, to name just a few.

\subsection{Acknowledgments and funding} We thank Matthieu Fradelizi, Robert Fraser, Buma Fridman, and Olivier Guédon for helpful discussions. We are grateful to an anonymous referee for pointing out the shortened proof of Lemma \ref{lemma:triangle_lemma}, significantly improving the quality of the paper. This work was completed under the financial support of the National Science Foundation through the MSPRF program (award number: 2502794). 

\section{Proofs}

We aim to first prove Lemma \ref{lemma:parallelogram_reduction_K_norm}. We note that this lemma was essentially proved in \cite{meyer2}. However, we will include the full proof in this paper, since we give a much more complete argument that also handles the case where $A$ and $B$ are both of dimension one, which we did not show in the original paper.

We will prove this result for an arbitrary norm whose closed unit ball is an origin symmetric convex body. Recall that a convex body $K\subset\mathbb{R}^n$ is a compact convex set with nonempty interior. The Minkowski functional of $K$ is defined by
\begin{align*}
    \|x\|_K=\inf\{\lambda>0:x\in \lambda K\},\qquad x\in \mathbb{R}^n.
\end{align*}
A convex body $K$ is symmetric about the origin if $K=-K$, and it is well known that if $K$ is symmetric about the origin, then $\|\cdot \|_K$ is a norm. The generalized Hausdorff distance from convex hull is defined in \cite{fradelizi1} by
\begin{align*}
    d^{(K)}(A):=\sup_{x\in \textup{conv}(A)}d^{(K)}(x,A),\qquad d^{(K)}(x,A):=\inf_{a\in A}\|x-a\|_K.
\end{align*}

In the following lemma, we consider the situation where a hyperplane intersects the convex hull of a compact set, and look for the minimal distance between two points of the compact set that belong to different half-spaces.

\begin{lemma}\label{lemma:opposite_hyperplane}
    Let $K\subset\mathbb{R}^n$ be a convex body that is symmetric about the origin, let $B\subset\mathbb{R}^n$ be a nonempty compact set, and let $H$ be a hyperplane that intersects $\textup{conv}(B)$. If $H_1$ and $H_2$ are the two closed half-spaces that are bounded by $H$, then the minimum
    \begin{align*}
        \inf\{\|b_1-b_2\|_K:b_1\in B\cap H_1,b_2\in B\cap H_2\}
    \end{align*}
    is attained, and every minimizing pair satisfies 
    \begin{align*}
        \|b_1-b_2\|_K\leq 2d^{(K)}(B).
    \end{align*}
\end{lemma}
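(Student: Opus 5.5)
The plan is to get attainment from compactness, and the bound by testing the definition of $d^{(K)}(B)$ at the midpoint of a minimizing pair.

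\emph{Attainment.} First I check that both $B\cap H_1$ and $B\cap H_2$ are nonempty. Suppose $B\cap H_2=\emptyset$. Then $B$ lies in the open half-space $H_1\setminus H$, which is convex. Hence $\textup{conv}(B)$ also lies in $H_1\setminus H$, so it is disjoint from $H$, contradicting the hypothesis. The same argument applies with $H_1$ and $H_2$ interchanged. Each $B\cap H_i$ is an intersection of a compact set with a closed set, so it is compact. The product $(B\cap H_1)\times(B\cap H_2)$ is therefore a nonempty compact set. The map $(b_1,b_2)\mapsto\|b_1-b_2\|_K$ is continuous, since $\|\cdot\|_K$ is a norm on $\mathbb{R}^n$. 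So the infimum is a minimum.

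\emph{The bound.} Fix a minimizing pair $(b_1,b_2)$ and set $\delta=\|b_1-b_2\|_K$. Let $m=\tfrac12(b_1+b_2)$, which lies in $\textup{conv}(B)$. I claim $d^{(K)}(m,B)\ge\delta/2$. Take any $b\in B$; since $H_1\cup H_2=\mathbb{R}^n$, $b$ lies in $H_1$ or in $H_2$.
\begin{itemize}
\item If $b\in H_1$, then $(b,b_2)$ is an admissible pair, so minimality gives $\|b-b_2\|_K\ge\delta$. The triangle inequality then yields
\begin{align*}
\|m-b\|_K\ge\|b-b_2\|_K-\|m-b_2\|_K\ge\delta-\tfrac{\delta}{2}=\tfrac{\delta}{2}.
\end{align*}
\item If $b\in H_2$, the same argument with the admissible pair $(b_1,b)$ and the triangle inequality around $b_1$ gives $\|m-b\|_K\ge\delta/2$.
\end{itemize}
Note that $\|m-b_i\|_K=\delta/2$ by homogeneity of the norm; symmetry of $K$ is what makes $\|\cdot\|_K$ a genuine norm here.

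Taking the infimum over $b\in B$ gives $d^{(K)}(m,B)\ge\delta/2$. Since $m\in\textup{conv}(B)$, this gives $d^{(K)}(B)\ge\delta/2$, that is, $\|b_1-b_2\|_K\le 2d^{(K)}(B)$.

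I do not expect a genuine obstacle. The only points needing care are the nonemptiness of both sides, which uses that $H$ meets $\textup{conv}(B)$, and the observation that every point of $B$ lies in one of the two closed half-spaces, so that minimality can be applied against the opposite endpoint.
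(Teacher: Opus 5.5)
Your proposal is correct and follows essentially the same route as the paper. Both arguments get nonemptiness of the two sides from $H\cap\textup{conv}(B)\neq\varnothing$ and attainment from compactness, then bound the minimum by testing the midpoint $m$ of a minimizing pair. The paper phrases the last step as a contradiction showing that no point of $B$ lies in the interior of $m+(M/2)K$, which is just the contrapositive of your direct triangle-inequality estimate; your direct form also absorbs the degenerate case $\|b_1-b_2\|_K=0$ that the paper treats separately.
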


\begin{proof}
    There exists $c\in \mathbb{R}$ and $u\in \mathbb{R}^n$ such that $H=\{h:\langle h,u\rangle =c\}$. Since $H$ intersects $\textup{conv}(B)$, we have
    \begin{align*}
        \min_{b\in B}\langle b,u\rangle\leq c\leq \max_{b\in B}\langle b,u\rangle,
    \end{align*}
    and therefore $B\cap H_1$ and $B\cap H_2$ are nonempty. Since these sets are compact, the infimum is attained for some $b_1$ and $b_2$. Denote the above infimum by $M$, and set $m:=(b_1+b_2)/2$. If $M>0$, then the interior of the $K$-ball $m+(M/2)K$ does not contain any points of $B$. To see this, choose $b\in B$ that lies in the interior of the $K$-ball. Then $b$ lies in a half-space $H_i$ and its distance to the point $b_j$ not belonging to $H_i$ is
    \begin{align*}
        \|b-b_j\|_K\leq \|b-m\|_K+\|m-b_j\|_K<\frac{M}{2}+\frac{M}{2}=M,
    \end{align*}
    which is a contradiction. Therefore,
    \begin{align*}
        \frac{M}{2}=d^{(K)}(m,B)\leq d^{(K)}(B),
    \end{align*}
    and the claim follows. Note that the claim is immediate when $M=0$.
\end{proof}

The next lemma shows how to write the sum of convex sets as an ``interior" part and an ``exterior" part. 

\begin{lemma}\label{lemma:sum_decomposition}
    Let $L,V\subset\mathbb{R}^n$ be compact convex sets with $0\in L\cap V$ and $\textup{aff}(L)\subset \textup{aff}(V)$. Then
    \begin{align*}
        L+V=V\cup (\partial V+L),
    \end{align*}
    where $\partial V$ denotes the relative boundary of $V$.
\end{lemma}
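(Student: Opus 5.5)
We prove the two inclusions separately; the inclusion $L+V\supset V\cup(\partial V+L)$ is immediate. Since $0\in L$, we have $V=V+0\subset L+V$. Since $\partial V\subset V$, we also have $\partial V+L\subset V+L$.

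For the reverse inclusion, take $x=v+\ell$ with $v\in V$ and $\ell\in L$, and assume $x\notin V$; we want to write $x=v'+\ell'$ with $v'\in\partial V$ and $\ell'\in L$. Work inside $E:=\textup{aff}(V)$. Because $0\in L\cap V$ and $\textup{aff}(L)\subset\textup{aff}(V)$, the affine hull $E$ is a linear subspace containing both $L$ and $V$, so $x\in E$. Consider the segment
\begin{align*}
    t\mapsto v+t\ell,\qquad t\in[0,1].
\end{align*}
It lies in $E$, starts at $v\in V$, and ends at $x\notin V$. Set
\begin{align*}
    t^*:=\sup\{t\in[0,1]:v+t\ell\in V\}.
\end{align*}
Since $V$ is compact, $v':=v+t^*\ell\in V$, and $t^*<1$ because $x\notin V$. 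Points $v+t\ell$ with $t>t^*$ lie in $E\setminus V$ and accumulate at $v'$. Hence $v'$ is not in the relative interior of $V$, which gives $v'\in\partial V$, the relative boundary.

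Then $x=v'+(1-t^*)\ell$. Because $L$ is convex with $0\in L$ and $1-t^*\in[0,1]$, we have $\ell':=(1-t^*)\ell=(1-t^*)\ell+t^*\cdot 0\in L$. This gives $x\in\partial V+L$, completing the reverse inclusion.

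The only delicate point is working with the relative boundary rather than the topological boundary. This is exactly where the hypothesis $\textup{aff}(L)\subset\textup{aff}(V)$ enters: it guarantees the exiting segment stays in $\textup{aff}(V)$, so the exit point is a relative boundary point. The degenerate case where $V$ is a single point is consistent with this: then $V=\{0\}$, $\partial V=V$, and $L\subset\{0\}$, so the identity holds trivially.
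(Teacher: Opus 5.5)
Your proof is correct and follows the paper's argument: take the last point $v+t^*\ell$ of the segment from $v$ to $x$ that lies in $V$, observe that it is a relative boundary point, and absorb the remainder $(1-t^*)\ell$ into $L$ using convexity and $0\in L$. You also spell out, via the supremum, compactness, and the fact that the segment stays in the linear subspace $\textup{aff}(V)$, the step the paper states in one line. One small correction to your closing aside: a singleton is its own relative interior, so for $V=\{0\}$ the relative boundary is empty rather than $V$; the identity still holds trivially there because $L=\{0\}$.
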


\begin{proof}
    Since $0\in L$, it is clear that the right side is contained in the left side. Let $x=l+v\in L+V$. There is nothing to prove if $x\in V$. If $x\notin V$, then the interval $[v,x]$ intersects the boundary of $V$ at some point $z=v+tl$, $0\leq t<1$. Then
    \begin{align*}
        x=z+(1-t)l\in z+L\subset \partial V+L,
    \end{align*}
    where we used that $0,l\in L$ imply $(1-t)l\in L$, which is true because $L$ is convex.
\end{proof}

The following lemma describes a technical situation that arises in the proof of Lemma \ref{lemma:parallelogram_reduction_K_norm}.

\begin{lemma}\label{lemma:line_passing_technical_lemma}
    Let $A,B\subset\mathbb{R}^2$ be compact sets of positive dimension, and suppose that $x\in \textup{conv}(A+B)$ such that 
    \begin{align*}
        x\notin A+\textup{conv}(B),\qquad x\notin \textup{conv}(A)+B.
    \end{align*}
    Assume that $a_1,a_2$ are distinct elements of $A$, and that $x\in [a_1,a_2]+\textup{conv}(B)$. Set $u:=a_2-a_1$, and let $L:=x+\textup{span}(u)$. Then
    \begin{align*}
       C:= L\cap (\textup{conv}(B)+a_1)\neq \varnothing,\qquad C+[0,u]=L\cap (\textup{conv}(B)+[a_1,a_2]).
    \end{align*}
    Moreover, if $y\in C$, then $x\in y+[0,u]$, and $C\cap (B+\{a_1,a_2\})=\varnothing$.
\end{lemma}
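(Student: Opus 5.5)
Write $V:=\textup{conv}(B)$ and $S:=[0,u]$, and translate by $a_1$. Then $[a_1,a_2]+V = a_1+(V+S)$, and the hypothesis says $x-a_1\in V+S$. The plan is to apply Lemma \ref{lemma:sum_decomposition} and then analyse everything on the line $L$, which is parallel to $u$.

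\textbf{Step 1: $C\neq\varnothing$ and $x\in y+S$ for every $y\in C$.} Since $x\in a_1+V+S$, we can write $x=a_1+v+tu$ with $v\in V$ and $t\in[0,1]$. Then $y_0:=a_1+v = x-tu$ lies in $L\cap(V+a_1)$, so $C\neq\varnothing$.

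For the second claim, take any $y\in C$ and write $x=y+su$ with $s\in\mathbb{R}$. The set $C$ is a compact interval (possibly a point) in $L$, so the admissible values of $s$ form an interval containing $t\in[0,1]$. I claim every admissible $s$ lies in $[0,1]$.

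Suppose instead $s<0$ for some $y\in C$. Since $t\ge 0$, the segment between $y$ and $y_0$ contains $x$, and $C$ is convex, so $x\in C\subset a_1+V\subset A+\textup{conv}(B)$. This contradicts the hypothesis. If $s>1$, then $t\le1<s$, so $x-u$ lies between $y$ and $y_0$, hence in $C$. That gives $x\in a_2+V$, again a contradiction. In fact this argument shows $x\notin C$ and $x\notin C+u$, so $s\in(0,1)$.

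\textbf{Step 2: $C+S=L\cap(V+[a_1,a_2])$.} The inclusion $\subseteq$ is clear, because $C\subset L$ and $L$ is invariant under adding multiples of $u$.

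For $\supseteq$, take $z=a_1+v+ru\in L$ with $v\in V$ and $r\in[0,1]$. Then $a_1+v=z-ru$ lies on $L$ and in $a_1+V$, so it lies in $C$. Hence $z\in C+S$.

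Lemma \ref{lemma:sum_decomposition} is not needed for this; the direct argument suffices.

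\textbf{Step 3: $C\cap(B+\{a_1,a_2\})=\varnothing$.} This is the step I expect to be the main obstacle, since it is the only place where the hypothesis $x\notin\textup{conv}(A)+B$ enters.

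Suppose $y\in C$ and $y=b+a_i$ for some $b\in B$. By Step 1, $x=y+su$ with $s\in[0,1]$. Then
\begin{align*}
x=b+\bigl(a_i+su\bigr).
\end{align*}
If $i=1$, then $a_1+su\in[a_1,a_2]\subset\textup{conv}(A)$, so $x\in\textup{conv}(A)+B$, a contradiction.

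If $i=2$, then $y-u=b+a_1$ satisfies $y-u\in b+a_1\subset a_1+V$ and $y-u\in L$, so $y-u\in C$. Applying Step 1 to $y-u$ forces $x=(y-u)+s'u$ with $s'\in[0,1]$, i.e. $s'=s+1\in[0,1]$. Hence $s=0$ and $x=y=b+a_2\in A+B$, a contradiction again.

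The care needed is in tracking the parameter $s$ consistently across different points of $C$. The interval argument of Step 1 handles this: it shows that every $y\in C$ has $x-y\in(0,1)u$.
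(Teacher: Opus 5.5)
Your proof is correct and follows essentially the same route as the paper: you work on the line $L$ and use convexity of $C$ together with $x\notin C\subset a_1+\textup{conv}(B)$ and $x\notin C+u\subset a_2+\textup{conv}(B)$ to place $x$ strictly inside $y+[0,u]$. You then rule out $y=b+a_1$ via $x\in b+[a_1,a_2]\subset B+\textup{conv}(A)$, and handle $y=b+a_2$ by passing to $y-u\in C$; the remaining differences are cosmetic (an explicit witness $y_0$ and a betweenness argument in place of the paper's parametrization $C=y+[-\alpha,\beta]u$).
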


\begin{proof}
    The set $\textup{conv}(B)+a_1$ is the union of all of its slices by lines parallel to $u$. The sum of $[0,u]$ with a set contained in one of these lines remains in the line. Therefore, the identity
    \begin{align*}
        \textup{conv}(B)+[a_1,a_2]=(\textup{conv}(B)+a_1)+[0,u]
    \end{align*}
    restricted to the line $L$ is exactly the desired identity. The set $C$ must be nonempty, because the right side of the identity contains $x$.

    For fixed $y\in C$, there exist $\alpha,\beta\geq 0$ such that
    \begin{align*}
        C=y+[-\alpha,\beta]u.
    \end{align*}
    Adding $u$ to $C$ gives
    \begin{align*}
        L\cap (\textup{conv}(B)+a_2)=y+[1-\alpha,1+\beta]u.
    \end{align*}
    Now, $x=y+\lambda u$ for some $\lambda\in [-\alpha,1+\beta]$. Since $x$ belongs to neither $\textup{conv}(B)+a_1$ nor $\textup{conv}(B)+a_2$, we get 
    \begin{align*}
        \beta< \lambda< 1-\alpha,
    \end{align*}
    so that $x\in y+[0,u]$.

    Finally, if $y=b+a_1\in C$, with $b\in B$, then $x\in b+[a_1,a_2]\subset B+\textup{conv}(A)$, which is a contradiction. If $y=b+a_2\in C$, then $y-u=b+a_1\in C$, and we arrive at the same contradiction. Therefore, $C$ does not contain any point of $B+\{a_1,a_2\}$.
\end{proof}

We have the necessary facts to prove the parallelogram reduction.

\begin{lemma}\label{lemma:parallelogram_reduction_K_norm}
        Let $K\subset\mathbb{R}^2$ be a convex body that is symmetric about the origin, and let $A,B\subset\mathbb{R}^2$ be nonempty compact sets. If $x\in \textup{conv}(A+B)$ such that 
        \begin{align*}
            x\notin \textup{conv}(A)+B,\qquad x\notin A+\textup{conv}(B),
        \end{align*}
        then there exist distinct $a_1,a_2\in A$ and distinct $b_1,b_2\in B$ with
        \begin{align*}
            \|a_1-a_2\|_K\leq 2d^{(K)}(A),\qquad \|b_1-b_2\|_K\leq 2d^{(K)}(B),
        \end{align*}
        such that 
        \begin{align*}
            x\in [a_1,a_2]+[b_1,b_2].
        \end{align*}
        Moreover, the intervals $[a_1,a_2]$ and $[b_1,b_2]$ are not parallel.
\end{lemma}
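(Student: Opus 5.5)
We first reduce to the case where $A$ and $B$ both have positive dimension. If $A$ is a single point, then $\textup{conv}(A+B)=A+\textup{conv}(B)$, and this contradicts $x\notin A+\textup{conv}(B)$. The same argument rules out $B$ being a point. By Carathéodory in $\mathbb{R}^2$ applied to $\textup{conv}(A)+\textup{conv}(B)$ (or by a direct planar argument), $x$ lies in $[a_1',a_2']+\textup{conv}(B)$ for some $a_1',a_2'\in A$. These points are distinct, since otherwise $x\in A+\textup{conv}(B)$.

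To see this, use $\textup{conv}(A+B)=\textup{conv}(A)+\textup{conv}(B)$ and write $x=\bar a+\bar b$. Consider the set $\textup{conv}(A)\cap(x-\textup{conv}(B))$, which is nonempty, compact and convex. Choose a line through it (or a support direction) and push $\bar a$ to the relative boundary of $\textup{conv}(A)$, where it lies on a segment between two points of $A$. This uses Lemma \ref{lemma:sum_decomposition} with $L=\textup{conv}(A)-a_0$ and $V$ a translate of $\textup{conv}(B)$, which expresses the sum as $V\cup(\partial V+L)$.

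The main step is to replace $a_1',a_2'$ by a pair of controlled length. Set $u=a_2'-a_1'$ and $L=x+\textup{span}(u)$. Take $H$ to be the line through $x$ in a direction transverse to $u$. More precisely, apply Lemma \ref{lemma:line_passing_technical_lemma}, which shows that $C=L\cap(\textup{conv}(B)+a_1')$ is a segment lying on one side of $x$ along $L$, with $C+u$ on the other side. The natural choice is then the line $H_A$ through $x$ parallel to a supporting direction of $\textup{conv}(B)$ separating the relevant pieces. Applying Lemma \ref{lemma:opposite_hyperplane} to $A$ with a line $H$ meeting $\textup{conv}(A)$ gives a minimizing pair $a_1,a_2$ on opposite sides of $H$ with $\|a_1-a_2\|_K\le 2d^{(K)}(A)$. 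Symmetrically, it gives $b_1,b_2$ with $\|b_1-b_2\|_K\le 2d^{(K)}(B)$.

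The key choice is to take $H$ to be a translate of a line through $x$, chosen so that the minimizing property forces $x\in[a_1,a_2]+[b_1,b_2]$. Concretely, I would parametrize lines $\ell$ through $x$ by direction. For each direction, the set $x-\ell$ meets $\textup{conv}(A)$ and $\textup{conv}(B)$, so Lemma \ref{lemma:opposite_hyperplane} yields minimizing pairs. One then shows that the parallelogram $[a_1,a_2]+[b_1,b_2]$ contains $x$ for a suitable direction, via a continuity or winding argument as the direction rotates by $\pi$. A cleaner alternative is to take $H_A:=\{h: h+B'\ni x\}$-type lines: choose $H$ for $A$ as a line through $x-b$ with $b$ chosen on the boundary of $\textup{conv}(B)$ determined by the previous step.

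The main obstacle is verifying containment. The minimizing pairs $a_1,a_2$ and $b_1,b_2$ have no points of $A$ (respectively $B$) strictly closer across $H$. If $x$ were outside the parallelogram, I would produce a point of $A+\textup{conv}(B)$ or $\textup{conv}(A)+B$ equal to $x$, or a closer crossing pair, contradicting minimality. This is where the planar structure matters, since the order of boundary crossings on a line is what makes the argument work.

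The degenerate case, where $A$ and $B$ are both one-dimensional, needs separate treatment. If the two affine hulls are parallel, then $\textup{conv}(A+B)$ is a segment and the hypotheses force a one-dimensional argument, using the final claim of Lemma \ref{lemma:line_passing_technical_lemma}. If they are not parallel, the claim follows directly.

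Non-parallelism of $[a_1,a_2]$ and $[b_1,b_2]$ comes from the same reasoning. If the segments were parallel, then $[a_1,a_2]+[b_1,b_2]$ would be a segment on a line through $x$. Lemma \ref{lemma:line_passing_technical_lemma} then shows that $x$ lies in $b_i+[a_1,a_2]$ or $a_j+[b_1,b_2]$, contradicting the hypothesis.
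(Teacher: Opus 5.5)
Your proposal has a genuine gap at the step you yourself call the main one: producing short pairs whose parallelogram actually contains $x$. You apply Lemma~\ref{lemma:opposite_hyperplane} to $A$ with a line transverse to $u$ (or ``parallel to a supporting direction of $\textup{conv}(B)$''), and separately to $B$. You then hope that a continuity or winding argument over directions, or a contradiction with minimality, forces $x\in[a_1,a_2]+[b_1,b_2]$. None of this is carried out, and it cannot work as stated. Two independently chosen minimizing pairs have no reason to span a parallelogram containing $x$. Also, the minimizing pairs do not depend continuously on the direction of the line (the minimizer can jump), so there is nothing for a winding argument to act on. The sentence ``if $x$ were outside the parallelogram, I would produce \dots'' is essentially the whole content of the lemma and remains unproved.

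The missing idea is to use the line \emph{parallel} to $u$ and to shorten the two segments \emph{one at a time}. Since $C=L\cap(\textup{conv}(B)+a_1')\neq\varnothing$, the line $L-a_1'$ meets $\textup{conv}(B)$. So Lemma~\ref{lemma:opposite_hyperplane} applied to $B$ (not $A$) with $H=L-a_1'$ gives $b_1,b_2\in B$ on opposite sides of $H$ with $\|b_1-b_2\|_K\leq 2d^{(K)}(B)$. They lie strictly on opposite sides, hence are distinct, because $C\cap(B+a_1')=\varnothing$. The segment $a_1'+[b_1,b_2]\subset a_1'+\textup{conv}(B)$ crosses $L$ at some $y\in C$. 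The clause ``$x\in y+[0,u]$ for every $y\in C$'' of Lemma~\ref{lemma:line_passing_technical_lemma} then yields $x\in[a_1',a_2']+[b_1,b_2]$ at once. Now $x\in\textup{conv}(A)+[b_1,b_2]$, and you repeat the same step with the roles of $A$ and $B$ exchanged: take the line through $x$ parallel to $b_2-b_1$ and apply Lemma~\ref{lemma:opposite_hyperplane} to $A$. This replaces $[a_1',a_2']$ by a short $[a_1,a_2]$ while keeping $[b_1,b_2]$. This is the paper's proof, and no minimality contradiction or topological argument is needed.

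Two smaller remarks. First, in your first step, Lemma~\ref{lemma:sum_decomposition} with $V$ a translate of $\textup{conv}(B)$ pushes the $B$-component (not $\bar a$) to the boundary. Its hypothesis $\textup{aff}(L)\subset\textup{aff}(V)$ is only guaranteed when $V$ is two-dimensional. That is why the paper arranges $\dim A=2$, takes $V=\textup{conv}(A)$, uses a supporting line through $a\in\partial\textup{conv}(A)\setminus A$, and treats $\dim A=\dim B=1$ separately via gaps. Second, your non-parallelism argument through the last clause of Lemma~\ref{lemma:line_passing_technical_lemma} is fine: if the segments were parallel, $a_1+b_1$ would lie both in $L\cap(\textup{conv}(B)+a_1)$ and in $B+a_1$.
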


\begin{proof}
    First, suppose that $\textup{dim}(A)=\textup{dim}(B)=1$. Write $x=a+b$, where $a\in \textup{conv}(A)$ and $b\in \textup{conv}(B)$. The assumption on $x$ implies that $a\notin A$ and $b\notin B$. Let $a_1,a_2$, and $b_1,b_2$ be the endpoints of the gaps of $A$ and $B$ that contain $a$ and $b$, in the sense that
    \begin{align*}
        a\in [a_1,a_2],\qquad b\in [b_1,b_2],\qquad \|a_1-a_2\|_K\leq 2d^{(K)}(A),\qquad \|b_1-b_2\|_K\leq 2d^{(K)}(B).
    \end{align*}
    We used that the midpoint of the gap $[a_1,a_2]$ has $K$-distance to $A$ exactly $\|a_2-a_1\|_K/2$. By definition, this distance cannot exceed $d^{(K)}(A)$, which implies the bound $\|a_2-a_1\|_K\leq 2d^{(K)}(A)$. Use an identical argument to get the estimate on $\|b_2-b_1\|_K$. Moving on, we have $x\in [a_1,a_2]+[b_1,b_2]$. Moreover, the intervals $[a_1,a_2]$ and $[b_1,b_2]$ are not parallel. Otherwise, interchange the roles of $A$ and $B$ if necessary, and label the endpoints so that $u:=a_2-a_1$ and $\lambda u=b_2-b_1$, with $\lambda\geq 1$. Then
    \begin{align*}
        [a_1,a_2]+[b_1,b_2]=a_1+b_1+([0,\lambda u]\cup [\lambda u,(1+\lambda)u]),
    \end{align*}
    so that either 
    \begin{align*}
        x\in a_1+[b_1,b_2],\qquad \textup{or}\qquad x\in b_2+[a_1,a_2],
    \end{align*}
    which in either case is a contradiction since $a_1+[b_1,b_2]\subset A+\textup{conv}(B)$ and $b_2+[a_1,a_2]\subset B+\textup{conv}(A)$.

    Next, suppose that at least one of the sets has dimension $2$. Interchanging and translating the sets if needed, we assume that $\textup{dim}(A)=2$ and that $0\in A\cap B$. Lemma \ref{lemma:sum_decomposition} gives
    \begin{align*}
        \textup{conv}(A)+\textup{conv}(B)=\textup{conv}(A)\cup (\partial \textup{conv}(A)+\textup{conv}(B)).
    \end{align*}
    Using the above representation and the assumption on $x$, write $x=a+b$, where 
    \begin{align*}
        a\in \partial\textup{conv}(A)\backslash A,\qquad b\in \textup{conv}(B).
    \end{align*}
    If $H$ is a supporting line of $\textup{conv}(A)$ that passes through $a$, then $a\in \textup{conv}(A\cap H)$. Since $A\cap H$ is a compact subset of a line and $a\notin A$, there exist distinct $\tilde{a}_1,\tilde{a}_2\in A\cap H$ with $a\in [\tilde{a}_1,\tilde{a}_2]$, and so $x\in [\tilde{a}_1,\tilde{a}_2]+\textup{conv}(B)$.

    Let $L$ be the line that passes through $x$ and is parallel to the interval $[\tilde{a}_1,\tilde{a}_2]$. By Lemmas \ref{lemma:opposite_hyperplane} and \ref{lemma:line_passing_technical_lemma}, the set $C:=L\cap (\textup{conv}(B)+\tilde{a}_1)$ is nonempty, and there exist $b_1,b_2\in B$ such that $b_1+\tilde{a}_1$ and $b_2+\tilde{a}_1$ lie on opposite sides of $L$ and satisfy
    \begin{align*}
        \|b_1-b_2\|_K\leq 2d^{(K)}(B).
    \end{align*}
    Now, the interval $\tilde{a}_1+[b_1,b_2]$ intersects $L$ at a point $y$. Then $y\in C$, and by Lemma \ref{lemma:line_passing_technical_lemma}, we have $x\in y+[0,\tilde{a}_2-\tilde{a}_1]$, and so
    \begin{align}\label{eq:halfway_done}
        x\in [\tilde{a}_1,\tilde{a}_2]+[b_1,b_2].
    \end{align}
    Repeat the same argument with the roles of $A$ and $B$ reversed, starting with \eqref{eq:halfway_done} to find $a_1,a_2\in A$ such that
    \begin{align*}
        \|a_1-a_2\|_K\leq 2d^{(K)}(A),\qquad x\in [a_1,a_2]+[b_1,b_2].
    \end{align*}
    We use the same argument from the beginning of this proof to show that the intervals must be nonparallel.
\end{proof}

After reducing to parallelograms, we dissect the parallelogram into two triangles and use the following bound.

\begin{lemma}\label{lemma:triangle_lemma}
    Let $T\subset\mathbb{R}^2$ be a triangle such that two of the sides have $\ell_p$-length $a$ and $b$ respectively. Then
    \begin{align*}
        d^{(\ell_p)}(\textup{vert}(T))^p\leq \max\{1,2^{p-2}\}\left(\left(\frac{a}{2}\right)^p+\left(\frac{b}{2}\right)^p\right).
    \end{align*}
\end{lemma}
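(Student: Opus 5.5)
Let the triangle have vertices $v_0,v_1,v_2$ with $\|v_1-v_0\|_p=a$ and $\|v_2-v_0\|_p=b$. For $x\in T$, $d^{(\ell_p)}(x,\textup{vert}(T))\le \min_i\|x-v_i\|_p$, so the task is to bound the largest minimum distance to a vertex.

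The plan is to split $T$ into the two subtriangles $T_1=\textup{conv}(v_0,v_1,m)$ and $T_2=\textup{conv}(v_0,v_2,m)$, where $m$ is the midpoint of the third side $[v_1,v_2]$. The side $[v_0,m]$ is the median. Because $\|\cdot\|_p^p$ is convex, it suffices to bound $\min_i\|x-v_i\|_p$ at a few well-chosen points, but the minimum of convex functions is not convex, so a cleaner route is needed.

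\textbf{Key step: a shared witness point.} For any $x\in T$ I would compare $x$ to the midpoints of the two sides: $m_1=(v_0+v_1)/2$ and $m_2=(v_0+v_2)/2$. I would write $x=v_0+s(v_1-v_0)+t(v_2-v_0)$ with $s,t\ge 0$ and $s+t\le 1$, and split into three cases.
\begin{itemize}
\item If $s\le 1/2$ and $t\le 1/2$, use $v_0$: $\|x-v_0\|_p\le s a+t b\le a/2+b/2$.
\item If $s\ge 1/2$, then $t\le 1/2$, and I use $v_1$: $x-v_1=(s-1)(v_1-v_0)+t(v_2-v_0)$, with coefficients $|s-1|\le 1/2$ and $t\le 1-s\le 1/2$. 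The same bound follows.
\item The case $t\ge 1/2$ is symmetric.
\end{itemize}
In every case the nearest vertex is within $\|\alpha u+\beta w\|_p$, where $u=v_1-v_0$, $w=v_2-v_0$ and $|\alpha|,|\beta|\le 1/2$.

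This reduces the lemma to a purely normed-space inequality:
\[
\sup_{|\alpha|,|\beta|\le 1/2}\|\alpha u+\beta w\|_p^p\le \max\{1,2^{p-2}\}\left(\left(\tfrac a2\right)^p+\left(\tfrac b2\right)^p\right).
\]
The left side is convex in $(\alpha,\beta)$, so its supremum is attained at the corners $\alpha,\beta=\pm1/2$. The claim therefore becomes
\[
\|u\pm w\|_p^p\le \max\{2,2^{p-1}\}\left(\|u\|_p^p+\|w\|_p^p\right)
\]
for $u,w\in\mathbb{R}^2$. This is a Clarkson-type inequality. It can be checked coordinatewise, since $\ell_p^p$ is a sum over coordinates: it suffices that $|s+t|^p\le \max\{2,2^{p-1}\}(|s|^p+|t|^p)$ for real $s,t$.
\begin{itemize}
\item For $p\ge 1$, convexity gives $|s+t|^p\le 2^{p-1}(|s|^p+|t|^p)$. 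This settles $p\ge 2$.
\item For $1\le p\le 2$, we have $2^{p-1}\le 2$, so the bound with constant $2$ follows as well.
\end{itemize}
Each coordinate inequality holds, so summing them gives the corner inequality and hence the lemma.

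\textbf{Main obstacle.} I worried that the case split loses too much, because the triangle-inequality bound $sa+tb$ is not what we want. Working in $\ell_p^p$ coordinatewise, as above, avoids this. The remaining check is that the corner reduction is legitimate, which it is because the function is convex on the box $[-1/2,1/2]^2$.

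Note that for $p<2$ this argument gives the constant $2^{p-1}\cdot 2^{-p}\cdot 2=1$ only loosely, via $\max\{2,2^{p-1}\}/2=1$. If that bookkeeping failed, I would instead use the sharper two-dimensional inequality $\min(\|u+w\|,\|u-w\|)^p\le 2^{\ldots}$, exploiting the freedom to pick the better diagonal.
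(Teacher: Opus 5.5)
Your case analysis is fine, but the corner reduction is off by a factor of $2$, and the corrected statement is false. At $\alpha=\beta=\tfrac12$ the required inequality reads $2^{-p}\|u+w\|_p^p\le\max\{1,2^{p-2}\}\,2^{-p}(a^p+b^p)$, i.e.\ $\|u\pm w\|_p^p\le\max\{1,2^{p-2}\}(\|u\|_p^p+\|w\|_p^p)$, and not $\max\{2,2^{p-1}\}$. For every $p>1$ this fails when $w$ is close to $u$: the left side is $\approx 2^pa^p$ and the right side is $\approx\max\{2,2^{p-1}\}a^p$. For example, with $p=2$, $u=(1,0)$, $w=(1,\varepsilon)$ it says $4+\varepsilon^2\le 2+\varepsilon^2$. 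So your argument only yields $d^{(\ell_p)}(\textup{vert}(T))^p\le\tfrac12(a^p+b^p)=2^{p-1}\bigl((a/2)^p+(b/2)^p\bigr)$, which is the lemma only for $p=1$. Because the corner inequality itself is false, no better estimate can rescue this route. The loss is in the choice of witness vertex. Your region $\{s,t\le\tfrac12\}$ contains the midpoint $s=t=\tfrac12$ of $[v_1,v_2]$. You measure that point to $v_0$, at distance $\|u+w\|_p/2$, although it lies at distance $\|u-w\|_p/2$ from $v_1$ and $v_2$; in the thin example these are about $1$ versus $\varepsilon/2$. Any bound by a supremum over a fixed box therefore produces the larger of the two half-diagonals, whereas the sharp constant needs a bound on their sum.

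The missing idea, which is the paper's argument, is to split $T$ along the median from $v_0$ and compare each point with two vertices at once. With $U=(u+w)/2$ and $V=(u-w)/2$, write $x=v_0+\alpha U+\beta V$ with $0\le\alpha\le1$ and $|\beta|\le\alpha$. If $\beta\ge0$, then $\|x-v_0\|_p\le\alpha\|U\|_p+\beta\|V\|_p$ and $\|x-v_1\|_p\le(1-\alpha)\|U\|_p+(1-\beta)\|V\|_p$. These add up to $\|U\|_p+\|V\|_p$, so the smaller one is at most $(\|U\|_p+\|V\|_p)/2$; when $\beta\le0$, use $v_2$ instead of $v_1$. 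Convexity of $t\mapsto t^p$ then gives $d^p\le(\|U\|_p^p+\|V\|_p^p)/2=2^{-p-1}(\|u+w\|_p^p+\|u-w\|_p^p)$. Clarkson's inequality $\|u+w\|_p^p+\|u-w\|_p^p\le\max\{2,2^{p-1}\}(a^p+b^p)$ finishes the proof, and it bounds exactly the sum your reduction cannot reach. That inequality can be checked coordinatewise as you intended, via $|y+z|^p+|y-z|^p\le\max\{2,2^{p-1}\}(|y|^p+|z|^p)$ for real $y,z$. Your closing remark about choosing the better diagonal points the right way, but it only addresses the midpoint. Since a minimum of convex functions is not convex, you need this averaging step to get a bound valid at every $x\in T$.
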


\begin{proof}
    Translate $T$ so that the sides of length $a$ and $b$ have a common vertex at $0$. Denote the other two vertices by $u$ and $v$ so that
    \begin{align*}
        \|u\|_p=a,\qquad \|v\|_p=b.
    \end{align*}
    Set
    \begin{align*}
        U:=\frac{u+v}{2},\qquad V:=\frac{u-v}{2}.
    \end{align*}
    For every $x\in T$, there exist nonnegative $s,t$, with $s+t\leq 1$ such that
    \begin{align*}
        x=su+tv=(s+t)U+(s-t)V.
    \end{align*}
    Define $\alpha:=s+t$ and $\beta:=s-t$, so that $0\leq \alpha\leq 1$, and $|\beta|\leq \alpha$. If $\beta\geq 0$, then compare the vertices $0$ and $u=U+V$, and use the inequality $\min\{X,Y\}\leq (X+Y)/2$, which holds for nonnegative $X,Y$, to get
    \begin{align*}
        d^{(\ell_p)}(x,\textup{vert}(T))&\leq \min\{\|x-0\|_p,\|x-u\|_p\}\\
        &\leq \min\{\alpha \|U\|_p+\beta\|V\|_p,(1-\alpha)\|U\|_p+(1-\beta)\|V\|_p\}\\
        &\leq \frac{\|U\|_p+\|V\|_p}{2}.
    \end{align*}
    If $\beta\leq 0$, then compare $0$ and $v=U-V$ to get the same bound. By convexity of $t\to t^p$, we get
    \begin{align*}
        d^{(\ell_p)}(\textup{vert}(T))^p\leq \frac{\|U\|_p^p+\|V\|_p^p}{2}.
    \end{align*}
    For $p=1$, the required estimate follows from the triangle inequality. Otherwise, when $1<p<\infty$, use Clarkson's inequality \cite[Theorem 2]{clarkson1} to get 
    \begin{align*}
        \|U\|_p^p+\|V\|_p^p&\leq \max\{2^{1-p},2^{-1}\}(a^p+b^p)\\
        &=\max\{2,2^{p-1}\}\left(\left(\frac{a}{2}\right)^p+\left(\frac{b}{2}\right)^p\right).
    \end{align*}
    The desired bound follows.
\end{proof}

\begin{proof}[Proof of Theorem \ref{theorem:p-subadditive_bound}]
    Let $x\in \textup{conv}(A+B)$. If either $x\in A+\textup{conv}(B)$ or $x\in \textup{conv}(A)+B$, then 
    \begin{align}\label{eq:max_bound}
        d^{(\ell_p)}(x,A+B)\leq\max\{d^{(\ell_p)}(A),d^{(\ell_p)}(B)\}.
    \end{align}
    Otherwise, $x$ satisfies the condition of Lemma \ref{lemma:parallelogram_reduction_K_norm}, and so $x$ is contained in a parallelogram whose vertex set is contained in $A+B$, and whose sides have $\ell_p$-length of at most $2d^{(\ell_p)}(A)$ and $2d^{(\ell_p)}(B)$. Dissect the parallelogram into two triangles, each of which has vertex set in $A+B$ and two of the sides have $\ell_p$-length at most $2d^{(\ell_p)}(A)$ and $2d^{(\ell_p)}(B)$. Let $T$ denote the triangle that contains $x$. Then by Lemma \ref{lemma:triangle_lemma}, we have
    \begin{align}\label{eq:p-subadditive-estimate}
        d^{(\ell_p)}(x,A+B)^p\leq d^{(\ell_p)}(\textup{vert}(T))^p\leq \max\{1,2^{p-2}\}\left(d^{(\ell_p)}(A)^p+d^{(\ell_p)}(B)^p\right).
    \end{align}
    The desired bound follows by taking the supremum over all $x\in \textup{conv}(A+B)$ and using the estimates \eqref{eq:max_bound} and \eqref{eq:p-subadditive-estimate}.
\end{proof}

\begin{proof}[Proof of Proposition \ref{proposition:sharpness_of_constant}]
    First, consider $1\leq p\leq 2$. If $\alpha=\beta=0$, take $A=B=\{0\}$. If $\alpha>0$ and $\beta=0$, set $A=\{(0,0),(2\alpha,0)\}$ and $B=\{0\}$, and similarly for the case $\alpha=0$ and $\beta>0$. If both $\alpha,\beta$ are positive, then set
    \begin{align*}
        A:=\{(0,0),(2\alpha,0)\},\qquad B:=\{(0,0),(0,2\beta)\}.
    \end{align*}
    Then $d^{(\ell_p)}(A)=\alpha$, $d^{(\ell_p)}(B)=\beta$. The sum $A+B$ is the vertex set of a rectangle with sides of $\ell_p$-lengths $2\alpha$ and $2\beta$ respectively. The farthest distance to a vertex is achieved at the center of the rectangle and it follows that \eqref{eq:p-subadditive_bound_formula} is equality.

    Next, consider $p>2$. Choose positive $\alpha$, and for $t:=2^{-1/p}\alpha$, set
    \begin{align*}
        A:=\{(0,0),(2t,2t)\},\qquad B:=\{(0,0),(-2t,2t)\}.
    \end{align*}
    Then
    \begin{align*}
        d^{(\ell_p)}(A)=d^{(\ell_p)}(B)=\alpha,
    \end{align*}
    so the right side of \eqref{eq:p-subadditive_bound_formula} is exactly $2^{p-1}\alpha^p$. The sum is
    \begin{align*}
        A+B:=\{(0,0),(2t,2t),(-2t,2t),(0,4t)\}.
    \end{align*}
    Set $m:=(0,2t)$, which is clearly in $\textup{conv}(A+B)$, being the midpoint of $(2t,2t)$ and $(-2t,2t)$. We compute
    \begin{align*}
        \|m-0\|_p^p=\|m-(2t,2t)\|_p^p=\|m-(-2t,2t)\|_p^p=\|m-(0,4t)\|_p^p=2^{p-1}\alpha^p,
    \end{align*}
    and it follows that
    \begin{align*}
        d^{(\ell_p)}(m,A+B)^p=2^{p-1}\alpha^p,
    \end{align*}
    so \eqref{eq:p-subadditive_bound_formula} is equality.
\end{proof}

\begin{proof}[Proof of Corollary \ref{l_infty_bound}]
    Define the invertible linear transformation $T:\mathbb{R}^2\rightarrow\mathbb{R}^2$ by
    \begin{align*}
        T(x,y)=(x+y,x-y).
    \end{align*}
    Then
    \begin{align*}
        \|T(x,y)\|_{\infty}=\|(x,y)\|_1,\qquad \textup{and}\qquad T(\textup{conv}(S))=\textup{conv}(T(S)),
    \end{align*}  
    for any nonempty compact $S\subset\mathbb{R}^2$, and therefore the formula
    \begin{align*}
        d^{(\ell_{\infty})}(T(S))=d^{(\ell_1)}(S)
    \end{align*}
    holds for any such $S$. By Theorem \ref{theorem:p-subadditive_bound} with $p=1$, and using the fact that $T(A+B)=T(A)+T(B)$, we have
    \begin{align*}
        d^{(\ell_\infty)}(T(A)+T(B))&=d^{(\ell_1)}(A+B)\\
        &\leq d^{(\ell_1)}(A)+d^{(\ell_1)}(B)\\
        &=d^{(\ell_{\infty})}(T(A))+d^{(\ell_\infty)}(T(B)).
    \end{align*}
    Because $T$ is invertible, the inequality holds for all nonempty compact sets. The bound is sharp as well, which we see by applying $T$ to the examples given in Proposition \ref{proposition:sharpness_of_constant} in the case $p=1$.
    
\end{proof}

\newpage
\bibliography{hausdorff_distance}

@article {A-2021,
    AUTHOR = {Artstein, Zvi},
     TITLE = {Convexification estimates for {M}inkowski averages in infinite
              dimensions},
   JOURNAL = {Pure Appl. Funct. Anal.},
  FJOURNAL = {Pure and Applied Functional Analysis},
    VOLUME = {6},
      YEAR = {2021},
    NUMBER = {4},
     PAGES = {709--718},
      ISSN = {2189-3756,2189-3764},
   MRCLASS = {52A05 (41A46 46A55 46B50 52A39)},
  MRNUMBER = {4302897},
}

@article {AK-2025,
    AUTHOR = {Artstein, Zvi and Kadets, Vladimir},
     TITLE = {B-convexity, convexification of {M}inkowski averages in a
              {B}anach space, and {SLLN} for random sets},
   JOURNAL = {J. Convex Anal.},
  FJOURNAL = {Journal of Convex Analysis},
    VOLUME = {32},
      YEAR = {2025},
    NUMBER = {1},
     PAGES = {61--70},
      ISSN = {0944-6532,2363-6394},
   MRCLASS = {46B20 (28B20)},
  MRNUMBER = {4950448},
}

@article {cassels1,
    AUTHOR = {Cassels, J. W. S.},
     TITLE = {Measures of the non-convexity of sets and the
              {S}hapley-{F}olkman-{S}tarr theorem},
   JOURNAL = {Math. Proc. Cambridge Philos. Soc.},
  FJOURNAL = {Mathematical Proceedings of the Cambridge Philosophical
              Society},
    VOLUME = {78},
      YEAR = {1975},
    NUMBER = {3},
     PAGES = {433--436},
      ISSN = {0305-0041,1469-8064},
   MRCLASS = {52A05 (90A99)},
  MRNUMBER = {385711},
MRREVIEWER = {Jacques\ Dubois},
       DOI = {10.1017/S0305004100051884},
       URL = {https://doi.org/10.1017/S0305004100051884},
}

@article {clarkson1,
    AUTHOR = {Clarkson, James A.},
     TITLE = {Uniformly convex spaces},
   JOURNAL = {Trans. Amer. Math. Soc.},
  FJOURNAL = {Transactions of the American Mathematical Society},
    VOLUME = {40},
      YEAR = {1936},
    NUMBER = {3},
     PAGES = {396--414},
      ISSN = {0002-9947,1088-6850},
   MRCLASS = {46B20},
  MRNUMBER = {1501880},
       DOI = {10.2307/1989630},
       URL = {https://doi-org.univ-eiffel.idm.oclc.org/10.2307/1989630},
}

@article {dyn1,
    AUTHOR = {Dyn, Nira and Farkhi, Elza},
     TITLE = {Set-valued approximations with {M}inkowski
              averages---convergence and convexification rates},
   JOURNAL = {Numer. Funct. Anal. Optim.},
  FJOURNAL = {Numerical Functional Analysis and Optimization. An
              International Journal},
    VOLUME = {25},
      YEAR = {2004},
    NUMBER = {3-4},
     PAGES = {363--377},
      ISSN = {0163-0563,1532-2467},
   MRCLASS = {41A30},
  MRNUMBER = {2072074},
       DOI = {10.1081/NFA-120039682},
       URL = {https://doi-org.univ-eiffel.idm.oclc.org/10.1081/NFA-120039682},
}

@article {emerson1,
    AUTHOR = {Emerson, W. R. and Greenleaf, F. P.},
     TITLE = {Asymptotic behavior of products {$C\sp{p}=C+\cdots +C$} in
              locally compact abelian groups},
   JOURNAL = {Trans. Amer. Math. Soc.},
  FJOURNAL = {Transactions of the American Mathematical Society},
    VOLUME = {145},
      YEAR = {1969},
     PAGES = {171--204},
      ISSN = {0002-9947,1088-6850},
   MRCLASS = {22.20},
  MRNUMBER = {249535},
MRREVIEWER = {L.\ Corwin},
       DOI = {10.2307/1995065},
       URL = {https://doi.org/10.2307/1995065},
}

@article {fradelizi1,
    AUTHOR = {Fradelizi, Matthieu and Madiman, Mokshay and Marsiglietti,
              Arnaud and Zvavitch, Artem},
     TITLE = {The convexification effect of {M}inkowski summation},
   JOURNAL = {EMS Surv. Math. Sci.},
  FJOURNAL = {EMS Surveys in Mathematical Sciences},
    VOLUME = {5},
      YEAR = {2018},
    NUMBER = {1-2},
     PAGES = {1--64},
      ISSN = {2308-2151,2308-216X},
   MRCLASS = {60E15 (11B13 60F15 94A17)},
  MRNUMBER = {3880220},
       DOI = {10.4171/EMSS/26},
       URL = {https://doi-org.univ-eiffel.idm.oclc.org/10.4171/EMSS/26},
}

@article {FLZ-2022,
    AUTHOR = {Fradelizi, Matthieu and L\'angi, Zsolt and Zvavitch, Artem},
     TITLE = {Volume of the {M}inkowski sums of star-shaped sets},
   JOURNAL = {Proc. Amer. Math. Soc. Ser. B},
  FJOURNAL = {Proceedings of the American Mathematical Society. Series B},
    VOLUME = {9},
      YEAR = {2022},
     PAGES = {358--372},
      ISSN = {2330-1511},
   MRCLASS = {52A40},
  MRNUMBER = {4474695},
MRREVIEWER = {Arnaud\ Marsiglietti},
       DOI = {10.1090/bproc/97},
       URL = {https://doi.org/10.1090/bproc/97},
}

@article {FMMZ-2016,
    AUTHOR = {Fradelizi, Matthieu and Madiman, Mokshay and Marsiglietti,
              Arnaud and Zvavitch, Artem},
     TITLE = {Do {M}inkowski averages get progressively more convex?},
   JOURNAL = {C. R. Math. Acad. Sci. Paris},
  FJOURNAL = {Comptes Rendus Math\'ematique. Acad\'emie des Sciences. Paris},
    VOLUME = {354},
      YEAR = {2016},
    NUMBER = {2},
     PAGES = {185--189},
      ISSN = {1631-073X,1778-3569},
   MRCLASS = {52A39 (52A20)},
  MRNUMBER = {3456896},
MRREVIEWER = {Julio\ Bernu\'es},
       DOI = {10.1016/j.crma.2015.12.005},
       URL = {https://doi.org/10.1016/j.crma.2015.12.005},
}

@article {meyer1,
    AUTHOR = {Meyer, Mark},
     TITLE = {Measuring the convexity of compact sumsets with the
              {S}chneider non-convexity index},
   JOURNAL = {J. Inequal. Appl.},
  FJOURNAL = {Journal of Inequalities and Applications},
      YEAR = {2025},
     PAGES = {Paper No. 103, 32},
      ISSN = {1029-242X},
   MRCLASS = {52A40},
  MRNUMBER = {4951555},
       DOI = {10.1186/s13660-025-03356-w},
       URL = {https://doi-org.univ-eiffel.idm.oclc.org/10.1186/s13660-025-03356-w},
}

@article {meyer2,
    AUTHOR = {Meyer, Mark},
     TITLE = {The {D}yn-{F}arkhi conjecture and the convex hull of a sumset
              in two dimensions},
   JOURNAL = {Proc. Amer. Math. Soc.},
  FJOURNAL = {Proceedings of the American Mathematical Society},
    VOLUME = {153},
      YEAR = {2025},
    NUMBER = {11},
     PAGES = {4889--4901},
      ISSN = {0002-9939,1088-6826},
   MRCLASS = {52A40 (52A10)},
  MRNUMBER = {4971580},
       DOI = {10.1090/proc/17313},
       URL = {https://doi-org.univ-eiffel.idm.oclc.org/10.1090/proc/17313},
}

@article {schneider1,
    AUTHOR = {Schneider, Rolf},
     TITLE = {A measure of convexity for compact sets},
   JOURNAL = {Pacific J. Math.},
  FJOURNAL = {Pacific Journal of Mathematics},
    VOLUME = {58},
      YEAR = {1975},
    NUMBER = {2},
     PAGES = {617--625},
      ISSN = {0030-8730,1945-5844},
   MRCLASS = {52A20},
  MRNUMBER = {380628},
MRREVIEWER = {H.\ W.\ Guggenheimer},
       URL = {http://projecteuclid.org/euclid.pjm/1102905693},
}

@article{starr1,
  title={Quasi-equilibria in markets with non-convex preferences},
  author={Starr, Ross M},
  journal={Econometrica: journal of the Econometric Society},
  pages={25--38},
  year={1969},
  publisher={JSTOR}
}

@article{vanHintum,
  title={The sharp threshold for {H}ausdorff convexification under {M}inkowski addition},
  author={van Hintum, Peter},
  journal={arXiv preprint arXiv:2606.10815},
  year={2026}
}

@article {vHST-2020,
    AUTHOR = {van Hintum, Peter and Spink, Hunter and Tiba, Marius},
     TITLE = {Sharp {$L^1$} inequalities for sup-convolution},
   JOURNAL = {Discrete Anal.},
  FJOURNAL = {Discrete Analysis},
      YEAR = {2023},
     PAGES = {Paper No. 7, 16},
      ISSN = {2397-3129},
   MRCLASS = {52A20 (26D15 49J53 90C25)},
  MRNUMBER = {4620306},
MRREVIEWER = {Jos\'e\ Vidal-N\'u\~nez},
       DOI = {10.19086/da},
       URL = {https://doi.org/10.19086/da},
}

@article {wegmann1,
    AUTHOR = {Wegmann, Rudolf},
     TITLE = {Einige {M}a\ss zahlen f\"ur nichtkonvexe {M}engen},
   JOURNAL = {Arch. Math. (Basel)},
  FJOURNAL = {Archiv der Mathematik},
    VOLUME = {34},
      YEAR = {1980},
    NUMBER = {1},
     PAGES = {69--74},
      ISSN = {0003-889X,1420-8938},
   MRCLASS = {52A05},
  MRNUMBER = {575552},
MRREVIEWER = {Diethard\ Pallaschke},
       DOI = {10.1007/BF01224931},
       URL = {https://doi.org/10.1007/BF01224931},
}
\bibliographystyle{abbrv}

\bigskip
\noindent Mark Meyer
\\
LAMA, Univ Gustave Eiffel, Univ Paris Est Creteil, 77447 Marne-la-Vall\'ee, France.
\\
E-mail address: mark.meyer@univ-eiffel.fr
\vspace{2mm}
\\

\end{document}